\documentclass[nobib]{tufte-handout}
\usepackage{amsmath,stmaryrd,amssymb,amsthm,url,booktabs,hyperref}

\newcommand{\mrev}[1]{\href{http://www.ams.org/mathscinet-getitem?mr=#1}{MR#1}}
\newcommand{\zbl}[1]{\href{http://www.emis.de/cgi-bin/MATH-item?#1}{Zbl #1}}
\newcommand{\arx}[1]{\href{http://arXiv.org/abs/#1}{arXiv:#1}}

\title{Homotopy types of topological stacks of categories}
\author[D.M. Roberts]{David Michael Roberts\thanks{%
\url{david.roberts@adelaide.edu.au}\\\medskip
\href{https://orcid.org/0000-0002-3478-0522}{orcid.org/0000-0002-3478-0522}\\\medskip
School of Computer and Mathematical Sciences, the University of Adelaide, SA 5005.\\\medskip
This work was supported in 2008--09 by an Australian Postgraduate Award.\\\medskip 
This document is released under a \href{https://creativecommons.org/licenses/by/4.0/}{CC BY 4.0 license}.\\\bigskip
\emph{Keywords}: Quillen's Theorem A, topological categories, topological stacks, homotopy types.\\\medskip
\emph{2020 Mathematics Subject Classification}. 55P15 (Primary); 55P10, 18F20, 18D40, 22A22 (Secondary)
}}
\date{11 June 2024}

\usepackage{euler}


\usepackage{ifpdf}
\ifpdf
  \usepackage[all,pdf]{xy} 
\else
  \input xy 
  \xyoption{all}
  \xyoption{2cell} 
  \xyoption{v2}
\fi

\SelectTips{cm}{11} 

\setcounter{secnumdepth}{2}
\theoremstyle{plain}
\newtheorem{thm}{Theorem}
\newtheorem{lemma}[thm]{Lemma}
\newtheorem{propn}[thm]{Proposition}
\newtheorem{corollary}[thm]{Corollary}

\newtheorem*{thma}{\textbf{Theorem A}}
\newtheorem*{thmaprime}{\textbf{Theorem A'}}

\theoremstyle{definition}
\newtheorem{defn}[thm]{Definition}
\newtheorem{example}[thm]{Example}

\newcommand{\To}{\longrightarrow}
\newcommand{\ST}{\rightrightarrows} 
\newcommand{\into}{\hookrightarrow}

\newcommand{\Top}{\textbf{Top}}
\newcommand{\CGH}{\textbf{CGH}}
\newcommand{\Set}{\textbf{Set}}
\newcommand{\Cat}{\textbf{Cat}}
\newcommand{\St}{\textbf{\textrm{St}}}

\DeclareMathOperator{\Obj}{Obj}
\DeclareMathOperator{\Mor}{Mor}

\DeclareMathOperator{\pr}{pr}
\DeclareMathOperator{\dom}{dom}
\DeclareMathOperator{\cod}{cod}

\DeclareMathOperator{\codisc}{codisc}
\DeclareMathOperator{\disc}{disc}
\DeclareMathOperator{\id}{id}
\DeclareMathOperator{\op}{op}

\def\sl {\downarrow}

\DeclareFontFamily{U}{min}{}
\DeclareFontShape{U}{min}{m}{n}{<-> udmj30}{}
\newcommand\yon{\!\text{\usefont{U}{min}{m}{n}\symbol{'210}}\!}

\begin{document}

\maketitle

\begin{abstract}
This note extends Quillen's Theorem A to a large class of categories internal to topological spaces. 
This allows us to show that under a mild condition a fully faithful and essentially surjective functor between such topological categories induces a homotopy equivalence of classifying spaces.
It follows from this that we can associate a 2-functorial homotopy type to a wide class of topological stacks of categories, taking values in the 2-category of spaces, continuous maps and homotopy classes of homotopies of maps. 
This generalises work of Noohi and Ebert on the homotopy types of topological stacks of groupoids under the restriction to the site with \emph{numerable} open covers.
\end{abstract}

It is well known that a category gives rise to a CW complex---its classifying space---and thus represents a homotopy type. In fact, any 
CW complex can be represented (up to homotopy) as the classifying space of a category. 
It is therefore of interest to know when a functor induces a homotopy equivalence of classifying spaces, and Quillen's Theorem A \cite{Quillen_73} answers this question for us: if all the geometric realisations of the lax fibres of a functor are contractible, the functor induces a homotopy equivalence.

However, there are homotopy types that are best realised as the classifying spaces of \emph{topological} categories, that is, categories internal to $\Top$, or some cartesian closed variant. 
Examples include the Borel construction $X \times_K EK$ for a space with the action of a topological group or monoid $K$. 
It is therefore natural to try to extend Theorem A to topological categories.

The original formulation of Quillen's Theorem A has the hypothesis that a family of spaces, indexed by a set, are each contractible. 
If one writes this down verbatim for topological categories, the hypothesis turns into ``such and such a map has contractible fibres", which isn't even sufficient to tell us that a map of topological spaces is a (weak) homotopy equivalence.

Instead of having contractible fibres, the map of interest is required to be \emph{shrinkable}: it has a section that is also a fibrewise homotopy inverse. 
Shrinkability is thus the suitable continuous version of a family of spaces being contractible. 
This adjustment only affects the last part of the proof, and indeed most of the current proof is carefully setting up the `internal' (to the category of topological spaces) version of the first part of Quillen's proof from \cite{Quillen_73}.

Let $f\colon X \to Y$ be functor between well-pointed topological categories (Definition~\ref{well-pointed}). 
We introduce a topological category $Y_0 \sl f$ (Definition~\ref{slice_category}) that is analogous to a union of the lax fibres of $f$. 
There is a canonical functor $\rho\colon Y_0\sl f \to \disc(Y_0)$, where $\disc(Y_0)$ denotes the topological category with objects $Y_0$ and only identity arrows.

\begin{thma}
If $f\colon X\to Y$ is a functor such that $B\rho\colon B(Y_0\sl f) \to Y_0$ is shrinkable, then $Bf$ is a homotopy equivalence.
\end{thma}

Note that a shrinkable map (Definition~\ref{shrinkable}) is, amongst other things, an \emph{acyclic fibration}. The reverse implication is also true if the domain and codomain are cofibrant. 
I also give a variant of this theorem asking merely for $B\rho$ to be an acyclic fibration, under the same hypothesis on $X$ and $Y$:

\begin{thmaprime}
If $f\colon X\to Y$ is a functor such that $B\rho\colon B(Y_0\sl f) \to Y_0$ is an acyclic Serre fibration, then $Bf$ is a weak homotopy equivalence.
\end{thmaprime}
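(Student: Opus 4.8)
As already remarked in the discussion preceding Theorem~A, the shrinkability of $B\rho$ enters only the final part of the proof of Theorem~A; the larger, earlier part is the internal (in $\Top$) translation of Quillen's argument and is insensitive to which notion of ``good map'' one feeds in. The plan is therefore to reopen that proof and run it essentially line by line, with \emph{weak homotopy equivalence} in place of \emph{homotopy equivalence}, and with \emph{acyclic Serre fibration} in place of \emph{shrinkable map} wherever the latter is invoked. Concretely, the proof of Theorem~A produces from $f$ a diagram of geometric realizations of simplicial spaces --- proper ones, courtesy of well-pointedness of $X$ and $Y$ --- relating $BX$, $BY$ and $B(Y_0\sl f)$, in which $Bf$ is forced to be a homotopy equivalence as soon as a single map manufactured out of $B\rho$ is one. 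In that step shrinkability is used only through the facts that a shrinkable map is a homotopy equivalence and that shrinkable maps are stable under pullback (and under the pushouts and realizations occurring in the comparison). Each such use I would replace by the corresponding fact for acyclic Serre fibrations.

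For this substitution to be legitimate I need the weak-equivalence counterpart of every closure property that was used, and these are all standard. An acyclic Serre fibration is a weak homotopy equivalence; being characterised by a right lifting property (against the inclusions $S^{n-1}\into D^n$), acyclic Serre fibrations are stable under pullback, so the base change of $B\rho$ appearing in the argument is still a weak equivalence; weak homotopy equivalences satisfy two-out-of-three; and a degreewise weak equivalence between proper (``good'') simplicial spaces induces a weak equivalence on geometric realizations --- this last point being precisely what the well-pointedness hypotheses are present to supply, with $Y_0\sl f$ inheriting well-pointedness from $Y$ via Definition~\ref{slice_category}. Where the proof of Theorem~A glues homotopy equivalences along cofibrations (a Dold-type gluing lemma), I would instead glue weak equivalences along cofibrations, whose hypotheses are again met because the constituent spaces are well-pointed. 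With these replacements in place, the same diagram chase that proved Theorem~A now yields that $Bf$ is a weak homotopy equivalence --- and it cannot in general do better, since an acyclic Serre fibration, unlike a shrinkable map, need not be an honest homotopy equivalence, which is exactly why the conclusion of Theorem~A$'$ is weakened.

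I expect the main obstacle to be bookkeeping rather than a new idea: one must verify that no step of the proof of Theorem~A secretly uses a strict section or an honest homotopy inverse --- data that shrinkability furnishes but an acyclic Serre fibration does not --- and that every realization or pushout occurring there meets the properness/cofibrancy conditions under which weak equivalences are preserved. Should some step genuinely require an honest homotopy equivalence, the fallback is to first replace $BX$ and $BY$ by weakly equivalent CW complexes and transport the statement across those weak equivalences; but I anticipate that the well-pointedness assumptions already make every intermediate object proper enough that the direct substitution goes through and gives the stated result.
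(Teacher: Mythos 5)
Your proposal is correct and follows essentially the same route as the paper's own proof: keep the span of (good, hence proper) simplicial spaces from the proof of Theorem A, note that acyclic Serre fibrations are pullback-stable by their lifting-property characterisation so that each $\beta_p$ is one, invoke the fact that a levelwise weak equivalence of proper simplicial spaces realises to a weak homotopy equivalence, and finish with 2-out-of-3. The only inessential discrepancy is your remark about $Y_0\sl f$ inheriting well-pointedness: what is actually needed is that $|D|_h$ is good, which follows from well-pointedness of $Y$ alone since its degeneracies are pullbacks of those of $NY^{\op}$.
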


We can then apply Theorem A to essentially surjective and fully faithful functors (where essential surjectivity means: a certain map has local sections over a numerable cover). 
As one would hope, such functors give rise to homotopy equivalences on geometric realisation.
Throughout this paper we work with $\CGH$, the category of compactly generated Hausdorff spaces, to ensure geometric realisation commutes with fibred products \cite[Corollary 11.6]{May_72}.

Note that a different approach to Quillen's Theorem B is taken in \cite{Meyer} for more general homotopy colimits, given as 2-sided bar constructions.
The approach taken here is more elementary and, in places, allows for a stronger conclusion under suitable hypothesis.

Finally, the postscript details how the results so far apply to give a well-defined functor
\[
    \St^{pres}(\CGH,\mathcal{O}_{\mathrm{num}}) \to \CGH_2
\]
assigning to each topological stack of categories (with mild restrictions) a homotopy type; the codomain here is the 2-category with objects CGH spaces, arrows continuous maps, and 2-arrows homotopy classes of homotopies of maps. 
The objects in the domain of this functor might well be called ``topological piles'' (following Rezk \cite{Rezk_ICM}) or ``topological c-stack'' (following Drinfeld \cite{Drinfeld}), though the latter seems more euphonious.

\textbf{Acknowledgments:} The author thanks Danny Stevenson for suggesting this line of inquiry and patient discussions. 
Thanks also to Maxine Elena Calle for asking about the details of my Theorem A, which I had stated without proof on the $n$Lab, prompting the long-delayed release of this note, and Yuxun Sun, who pointed out an error in an early public release version of the paper.
An anonymous referee made a number of suggestions that helped tighten up the article where, in places, the text dated back to when I wrote this as a student.

\section{First constructions}

We first describe a number of categories which will appear in the proof of Theorem A. 
The constructions work for internal categories in any finitely complete ambient category (in particular for topological categories), but are also given here for categories in $\Set$ to clarify the nature of the objects and the arrows.

\begin{defn}
  For $Y$ any category, the category $TY$ is defined to be the strict pullback
  \[
    \xymatrix{
       TY  \ar[r]  \ar[d] & Y^{\mathbf{2}} \ar[d]^{\dom}
       \\
       \disc(Y_0) \ar[r]_-i & Y
    }
  \]
  where $\disc(Y_0)$ is the discrete category with objects $Y_0 := \Obj(Y)$, $i$ the canonical inclusion, $Y^{\mathbf{2}}$ is the arrow category of $Y$, and $\dom$ is the domain functor.
\end{defn}

Explicitly, objects of $TY$ are morphisms $g \colon a \to b$ in $Y$,  and morphisms $\xymatrix{g \ar[r]^h & g'}$ in  $TY$ are commuting triangles
\[
\xymatrix{
	a \ar[r]^g \ar@/_.5pc/[dr]_{g'}& b \ar[d]^h \\
	& b'
}
\]
in $Y$.
The category $TY$ can be imagined as the union of based path `spaces' over all basepoints.

\begin{defn}
Given a category $Y$, define the \emph{twisted arrow category} $\natural Y$ as follows. 
It has the same objects as $Y^\mathbf{2}$,
\[
    \Obj(\natural Y) = \{g \colon a \to b \in \Mor(Y)\} =: Y_1,
\]
but morphisms $\xymatrix{g \ar[r]^{(h,k)} & g'}$ the commuting squares
\[
    \xymatrix{
        a \ar[r]^{g} & b \ar[d]^{h} \\
        a' \ar[u]^{k} \ar[r]_{g'} & b'
    }
\]
in $Y$.
We compose in this category by pasting squares vertically.  
 \end{defn}

From the point of view of defining the twisted arrow category of $Y$ \emph{internally}, we can describe the underlying internal directed graph as follows:
\begin{itemize}
\item The object of objects is $Y_1$
\item The object of arrows is $Y_3 = Y_1\times_{Y_0}Y_1\times_{Y_0} Y_1$, the object of composable triples of morphisms of $Y$.
\item The source and target maps are the projection $\pr_2\colon Y_3 \to Y_1$ and the composition map $Y_3\to Y_1$, respectively.
\end{itemize}
It is then an easy exercise to define the internal composition map $Y_3\times_{Y_1} Y_3 \to Y_3$ (and the unit map $Y_1\to Y_3$).

Clearly there is an inclusion $TY \into \natural Y$, sending 
\[
\raisebox{20pt}{
    \xymatrix{
    	a \ar[r]^g \ar@/_.5pc/[dr]_{g'}& b \ar[d]^h \\
    	& b'
    }
}
\mapsto
\raisebox{20pt}{
    \xymatrix{
        a \ar[r]^{g} \ar@{=}[d] & b \ar[d]^{h} \\
        a \ar[r]_{g'} &  b'
    }}.
\]
We can similarly define a wide subcategory $T^oY\into \natural Y$ where the morphisms are only of the form $\xymatrix{g \ar[r]^{(\id,k)} & g'}$, hence diagrams of the form
\[
    \xymatrix{
        a \ar[r]^g & b  \\
        a' \ar[u]^k  \ar@/_.5pc/[ur]_{g'} & 
    }
\]
in $Y$. 
Notice that we have $T^oY =  T(Y^{\op})$.

There is a functor $\cod^\natural \colon \natural Y \to Y$, which sends $(x \to y) \mapsto y$ and
\[
\raisebox{20pt}{
    \xymatrix{
        a \ar[r]   &b \ar[d] \\
        a' \ar[r]\ar[u] &  b'
    }}
\mapsto
\raisebox{20pt}{
    \xymatrix{
        b \ar[d] \\
        b'
    }
}.
\]
This clearly restricts to a functor $\cod^T\colon TY \to Y$.

There is another functor $\dom^\natural \colon \natural Y \to Y^{\op}$, this time sending a morphism to its source, and a square to the left vertical map. 
This restricts to the functor $\dom^T\colon TY \to \disc(Y_0)$ sending $a\to b$ to $a$. 
There is a section $\sigma\colon \disc(Y_0) \to TY$ of $\dom^T$ that is also a left adjoint.
Similarly, there is a section $\tau\colon \disc(Y_0) \to T^oY$ of $\cod^{T^o}$ that is a right adjoint.

\begin{defn}
\label{slice_category}
Let $f\colon X \to Y$ be a functor. The category $Y_0 \sl f$ is defined as the strict pullback
\[
        \xymatrix{
        Y_0 \sl f \ar[r] \ar[d] & TY\ar[d]^{\cod^T} \\
        X \ar[r]_{f} & Y
    }
\]
\end{defn}

The objects of $Y_0 \sl f$ are pairs $(g\colon a \to f(b),b)$, for $g\in\Mor(Y)$ and $b\in X_0$, with morphisms a pair consisting of a commuting triangle 
\[
    \xymatrix{
    	a \ar[r]^g \ar@/_.5pc/[dr]_{g'}& f(b) \ar[d]^{f(h)} \\
    	& f(b')
    }
\]
in $Y$, and the arrow $h$ from $X$. 
This category acts like the union of the lax fibres of the functor $f$ at all basepoints.
Note that taking $f=\id_Y$ we get $Y_0\sl \id_Y = TY$.

\begin{defn}
Let $f\colon X \to Y$ be a functor. 
The category $S(f)$ is defined as the strict pullback
\[
    \xymatrix{
        S(f) \ar[r]^{\widehat{f}} \ar[d]_{Q_f} & \natural Y \ar[d]^{\cod^\natural} \\
        X \ar[r]_f & Y\,.
    }
\]
\end{defn}

Again taking $f=\id_Y$, we have $S(\id_Y) = \natural Y$.

We thus have spans of categories that fit into a commutative diagram
\begin{equation}
\label{map_of_spans}
    \raisebox{16pt}{
    \xymatrix{
        X \ar[d]_f &  \ar[l]_-{Q_f} S(f) \ar[d]_{\widehat{f}} \ar[r]^-{P_f}& Y^{\op}\ar@{=}[d] \\
        Y &\ar[l]^{\cod^\natural} \natural Y  \ar[r]_{\dom^\natural}& Y^{\op} 
    }
    }
\end{equation}
where $P_f$ is defined as the composite $\dom^\natural\circ \widehat{f}$.
We will show below that the functors $\cod^\natural$, $\dom^\natural$ and $Q_f$ are sent by geometric realisation to homotopy equivalences---and when $f$ satisfies the hypothesis in Theorem A, the same is true for $P_f$. 
Hence by the 2-out-of-3 property for homotopy equivalences, $\widehat{f}$ is then sent to a homotopy equivalence, and hence so is $f$.

\section{Classifying spaces}

We define the functor $B\colon \Cat \to \CGH$ to be the composite $|N(-)|$, where $N$ is the standard nerve construction and $|-|$ is geometric realisation. 
We will also use the same notation for the geometric realisation of the nerve of a topological category.

If a functor becomes a  homotopy equivalence when applying $B$, then we say the functor is a  homotopy equivalence. 
The following proposition is stated in \cite[\S1, Proposition 2]{Quillen_73}, for example, and holds for categories replaced by topological categories.

\begin{propn}\label{prop:natransf_homotopy}
If $\alpha\colon F \Rightarrow G\colon C \to D$ is a natural transformation, that is, a functor $\alpha\colon C \times \mathbf{2} \to D$, then $B\alpha$ is a homotopy from $BF$ to $BG$.
\end{propn}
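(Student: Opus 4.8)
The plan is to repackage the natural transformation as a functor out of a cylinder and then transport that cylinder through the nerve and through geometric realisation, both of which are compatible with products. First I would recall the standard repackaging: a natural transformation $\alpha\colon F\Rightarrow G$ is exactly a functor $\alpha\colon C\times\mathbf 2\to D$ (with $\mathbf 2$ the category $0\to 1$) whose restrictions along the two inclusions $C\cong C\times\{i\}\into C\times\mathbf 2$ for $i=0,1$ are $F$ and $G$; the component $\alpha_c$ is the image of $(\id_c,0\to1)$, and functoriality of $\alpha$ is precisely the collection of naturality squares. This holds verbatim internally to $\CGH$, reading $\mathbf 2$ as the topological category with two objects, one non-identity arrow, and discrete object- and morphism-spaces.

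Next I would use that $N$ preserves products --- for ordinary categories because $N$ is a right adjoint (to the fundamental-category functor $\mathbf{sSet}\to\Cat$), and for topological categories because the limits defining $N$ levelwise commute with products --- so that $N(C\times\mathbf 2)\cong NC\times N\mathbf 2$ with $N\mathbf 2\cong\Delta^1$ (a discrete simplicial space in the internal case). Applying $|-|$ and using that it carries this product to the product of the underlying spaces in $\CGH$ --- a special case of the compatibility of geometric realisation with fibred products already invoked in the introduction, \cite[Corollary 11.6]{May_72} --- gives a homeomorphism $B(C\times\mathbf 2)\cong BC\times|\Delta^1|=BC\times[0,1]$. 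Under this identification $B\alpha=|N\alpha|$ is a continuous map $BC\times[0,1]\to BD$, and since $B$ is functorial and the two vertex inclusions $\Delta^0\to\Delta^1$ realise to the endpoint inclusions of $[0,1]$, its restrictions to $BC\times\{0\}$ and $BC\times\{1\}$ are $B$ applied to $\alpha|_{C\times\{0\}}=F$ and $\alpha|_{C\times\{1\}}=G$. Hence $B\alpha$ is a homotopy from $BF$ to $BG$.

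The only step that really uses a hypothesis is the commutation of geometric realisation with the product $NC\times\Delta^1$ (and, internally, with the product of a simplicial space against a discrete simplicial set): this fails in $\Top$ but holds in $\CGH$, which is exactly the reason the paper fixes $\CGH$ as its ambient category. Everything else --- the repackaging of $\alpha$, the preservation of products by $N$, and the identification of the two endpoint restrictions --- is routine bookkeeping.
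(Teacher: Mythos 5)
Your proof is correct and is essentially the standard argument that the paper defers to by citation (Quillen's \S1, Proposition 2): realise the cylinder $C\times\mathbf{2}$, use that $N$ and $|-|$ preserve the relevant products in $\CGH$ to identify $B(C\times\mathbf{2})$ with $BC\times[0,1]$, and check the endpoint restrictions. Your closing remark correctly identifies the product-preservation of realisation in $\CGH$ as the only point needing care in the topological setting.
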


As noted by Quillen, it follows that any functor with an adjoint is sent by geometric realisation to a homotopy equivalence, although the triangle identities are not needed for this conclusion.

\begin{example}
The maps $B\dom^T\colon BTY\to Y_0$ and $B\cod^{T^o}\colon BT^oY\to Y_0$ are homotopy equivalences.
\end{example}

Notice nothing specific has been said so far about topological categories---I haven't needed to because everything said so far works perfectly fine for categories internal to $\CGH$, in particular the functor $B$ extends to give a functor $\Cat(\CGH)\to \CGH$ (cf \cite[Corollary 11.6]{May_72}) that preserves finite limits, and which will denoted by the same letter. 
However, passing to topological categories doesn't go through completely without some alteration. 
Recall that a simplicial space is a simplicial object in $\CGH$. 
Our main source of simplicial spaces is by the nerves of topological categories, and it is good to know when we can expect to get a homotopy equivalence of geometric realisations of these nerves.

\begin{propn}[{\cite[Proposition A.1]{Segal_74}}]\label{prop:geom_realise_good_sspace}
If $f\colon M \to N$ is a map of simplicial topological spaces such that $f_j\colon M_j \to N_j$ is a  homotopy equivalence, and if the degeneracy maps of $M$ and $N$ are all closed cofibrations, then $|f|\colon |M| \to |N|$ is a  homotopy equivalence.
\end{propn}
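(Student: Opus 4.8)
The plan is to run the comparison along the skeletal filtration of geometric realisation, degree by degree, gluing the conclusions together with the standard gluing lemma for homotopy equivalences and then passing to a colimit. Recall that $|M|$ carries a filtration $|M|^{(0)}\subseteq|M|^{(1)}\subseteq\cdots$ with $|M|=\bigcup_n|M|^{(n)}$, in which $|M|^{(n)}$ is the pushout of the span
\[
    |M|^{(n-1)}\longleftarrow \big(L_nM\times\Delta^n\big)\cup_{L_nM\times\partial\Delta^n}\big(M_n\times\partial\Delta^n\big) \longrightarrow M_n\times\Delta^n,
\]
where $L_nM\subseteq M_n$ is the $n$-th latching space $\bigcup_{i=0}^{n-1}s_i(M_{n-1})$, the union of the images of the degeneracies. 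A map $f$ of simplicial spaces respects all of this, inducing a map of filtered spaces and a map of each pushout square, with the three relevant corner maps being $|f|^{(n-1)}$, $f_n\times\id_{\Delta^n}$ and the induced map $L_nM\to L_nN$. Throughout I will use that closed cofibrations are stable under pushout and under product with any CGH space, and that product with the CW pair $(\Delta^n,\partial\Delta^n)$ sends a closed cofibration $A\into B$ to the closed cofibration $A\times\Delta^n\cup_{A\times\partial\Delta^n}B\times\partial\Delta^n\into B\times\Delta^n$.

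The decisive step is to upgrade the hypothesis: if every degeneracy of $M$ is a closed cofibration (a \emph{good} simplicial space, in Segal's terminology), then each latching inclusion $L_nM\into M_n$ is a closed cofibration (a \emph{proper} simplicial space). For this I would analyse the cube of closed subspaces $\bigcap_{i\in S}s_i(M_{n-1})\subseteq M_n$ indexed by $S\subseteq\{0,\dots,n-1\}$; the simplicial identities identify these intersections with images of iterated degeneracies, one checks each inclusion in the cube is a closed cofibration, and then the union $L_nM\into M_n$ is seen to be a closed cofibration by a union theorem for cofibrations (Lillig's theorem). Granting this, the left leg of the span above is a closed cofibration by the last fact of the previous paragraph, so the pushout square is a homotopy pushout and $|M|^{(n-1)}\into|M|^{(n)}$ is a closed cofibration; likewise for $N$.

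I would then induct on $n$. The base case is $|f|^{(0)}=f_0$, a homotopy equivalence by hypothesis. For the inductive step, $|f|^{(n-1)}$ is a homotopy equivalence by hypothesis, $f_n\times\id_{\Delta^n}$ is one because $f_n$ is, and $L_nM\to L_nN$ is one by an inner induction: $L_nM$ is built from the spaces $M_j$, $j<n$, and the degeneracy maps by exactly the kind of cofibrant colimit just described, so a levelwise homotopy equivalence of good data induces a homotopy equivalence on it. Since each compared square has a leg that is a closed cofibration, the gluing lemma for homotopy equivalences gives that $|f|^{(n)}\colon|M|^{(n)}\to|N|^{(n)}$ is a homotopy equivalence. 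Finally $|M|=\operatorname{colim}_n|M|^{(n)}$ and $|N|=\operatorname{colim}_n|N|^{(n)}$ are sequential colimits along closed cofibrations and $|f|=\operatorname{colim}_n|f|^{(n)}$; a map of sequential colimits along closed cofibrations that is a homotopy equivalence at every finite stage is a homotopy equivalence (by a mapping-telescope argument, or because these colimits compute homotopy colimits in $\CGH$), so $|f|$ is a homotopy equivalence.

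The main obstacle is the properness step together with the inner induction on the latching spaces: keeping simultaneous control of the cofibrancy of $L_nM\into M_n$ and of its homotopy type — so that $L_nM\to L_nN$ is a homotopy equivalence — is precisely the delicate combinatorial bookkeeping in Segal's original argument, and everything else is a formal consequence of the gluing lemma and the stability of closed cofibrations under pushout, product with CW pairs, and sequential colimit.
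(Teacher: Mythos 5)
The paper gives no proof of this proposition --- it is imported verbatim from Segal's Appendix A.1 --- and your sketch is essentially Segal's own argument: induction over the skeletal filtration, the gluing lemma for homotopy equivalences, and the upgrade from ``good'' to ``proper'' (latching inclusions are closed cofibrations), which is exactly the content of the \cite{Lewis} reference the paper invokes later. Your outline is correct and matches the cited source's approach.
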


Segal refers to a simplicial space $M$ satisfying the condition ``all the degeneracy maps of $M$ are closed cofibrations'' as a \emph{good} simplicial space. 
But this is at the level of simplicial spaces, and we are more interested in the intrinsic properties of topological categories. 
This boils down to talking about the identity-assigning map $e\colon C_0 \to C_1$ of a topological category $C$, from which all the degeneracy maps of $NC$ are formed.

We know that the nerve of a topological group, when considered as a one-object groupoid, is good if the inclusion of the identity element is a closed cofibration. 
When we pass to many elements, we need a relative version of cofibration, which we will define via a relative version of NDR-pairs.

\begin{defn}
Let $B$ be a space and $\CGH/B$ the category of spaces over $B$. 
A pair\footnote{We do not distinguish between the pair $(X,A)$ and the inclusion map $A\into X$.}
$(X,A)$ in $\CGH/B$ is an \emph{NDR-pair over $B$} if there are maps
\[
    \xymatrix{
        X \ar[rr]^u \ar[dr] & & I \times B \ar[dl]^{\pr_2} \\
        & B&
    }\qquad
    \xymatrix{
        X \times I \ar[rr]^h \ar[dr] & &X \ar[dl] \\
        & B&
    }
\]
such that:
\begin{enumerate}

    \item $A = u^{-1}(\{0\}\times B)$;

    \item For all $x\in X$, $h(x,0) = x$, and for all $(a,t)\in A\times I$, $h(a,t) = a$;

    \item For all $x\in u^{-1}([0,1)\times B)$, $h(x,1) \in A$.

\end{enumerate}
We say $(u,h)$ \emph{represent} $(X,A)$.
\end{defn}

As trivial example (needed for the subsequent lemma), consider an arbitrary (CGH) space $C\to B$ over $B$, and the pair $(C,\emptyset)$. 
The constant function $C\to [0,1]$, $c\mapsto 1$ and the constant homotopy on the identity map of $C$, $h(c,t) = c$, represent $(C,\emptyset)$ as an NDR-pair over $B$.
Note that it is not merely true that $A\into X$ is a closed cofibration, but that the inclusion map between fibres over any given $b\in B$ is also a closed cofibration. 
We need this formulation of NDR-pair in the slice category, so that the following product lemma holds.

\begin{lemma}\label{cofib_over}
If $(X,A)$ is an NDR-pair over $B$, $C \to B$ a space over $B$, then $(X\times_B C,A\times_B C)$ is an NDR-pair over $B$.
\end{lemma}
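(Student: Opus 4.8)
The plan is to push the representing data of $(X,A)$ forward along the projection $\pr_1\colon X\times_B C \to X$ (I read the ``$Z$'' in the statement as the subspace $A$). Let $(u,h)$ represent $(X,A)$ over $B$, so that $u\colon X\to I\times B$ and $h\colon X\times I \to X$ are maps over $B$ satisfying the three conditions, and write $\pr_2\colon X\times_B C\to C$ for the second projection. I would set $u' := u\circ\pr_1\colon X\times_B C \to I\times B$, which is a map over $B$ because $u$ is one and $\pr_1$ respects the maps to $B$. For the homotopy, note that $h\circ(\pr_1\times\id_I)\colon (X\times_B C)\times I \to X$ and the map $(X\times_B C)\times I \to C$ which forgets the $I$ factor and then applies $\pr_2$ have the same composite into $B$; here one uses that $h$ is over $B$, so $h(x,t)$ lies over the same point of $B$ as $x$, hence as the matching point of $C$. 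By the universal property of the pullback $X\times_B C$ in $\CGH$ these therefore induce a continuous map $h'\colon (X\times_B C)\times I \to X\times_B C$, which informally is $h'((x,c),t)=(h(x,t),c)$, and which is again a map over $B$. (Throughout one uses that the relevant finite limits in $\CGH$ are computed as in $\Top$, so these pullbacks behave concretely as expected.)

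It then remains to verify the three NDR conditions over $B$ for the pair $(u',h')$, and I expect each to follow by a direct unwinding. For (1): $(u')^{-1}(\{0\}\times B) = \pr_1^{-1}\big(u^{-1}(\{0\}\times B)\big) = \pr_1^{-1}(A) = A\times_B C$. For (2): $h'(-,0)$ is the map induced by $\big(h(-,0),\pr_2\big)=(\pr_1,\pr_2)$, which is $\id_{X\times_B C}$; and on $(A\times_B C)\times I$ we have $h\circ(\pr_1\times\id_I)=\pr_1$ because $h|_{A\times I}=\pr_A$, so $h'$ restricts there to the projection onto $A\times_B C$. For (3): if $(x,c)\in(u')^{-1}[0,1)$ then $x\in u^{-1}[0,1)$, so $h(x,1)\in A$, whence $h'((x,c),1)=(h(x,1),c)\in A\times_B C$.

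I do not anticipate a genuine obstacle here: the content of the lemma is essentially the bookkeeping that the new data $(u',h')$ really are morphisms in $\CGH/B$, which is settled once the pullback $X\times_B C$ and the induced maps out of and into it are understood concretely via the universal property. The verification of the three axioms is then formal.
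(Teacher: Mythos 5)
Your proof is correct and complete. For comparison: the paper does not actually write out an argument here --- it only remarks that the lemma is ``a slight modification of the result in \cite{May} on products of NDR-pairs'', i.e.\ of the statement that if $(X,A)$ and $(Y,B)$ are NDR-pairs represented by $(u_1,h_1)$ and $(u_2,h_2)$ then $(X\times Y,\ X\times B\cup A\times Y)$ is an NDR-pair represented by a combination of the two via minima and reparametrised homotopies. Your route is genuinely simpler and, I would say, the right one for this particular statement: since the second factor $C$ carries no nontrivial subspace, there is nothing to combine, and the representing pair $(u,h)$ simply pulls back along $\pr_1$ to $(u\circ\pr_1,\ h')$ with $h'((x,c),t)=(h(x,t),c)$. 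Your verification of the three axioms is exactly the bookkeeping required, and your observation that $h'$ lands in $X\times_B C$ precisely because $h$ is a homotopy \emph{over} $B$ is the one point of substance --- this is where the ``over $B$'' hypothesis is actually used, and it is what makes the fibred-product version work where a naive product argument would not. (Two trivia: the ``$Z$'' in the statement is indeed a typo for ``$A$'', as you guessed; and the caveat about limits in $\CGH$ versus $\Top$ is harmless here since all induced maps are produced by the universal property in $\CGH$ itself.)
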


The proof of this lemma is a slight modification of the result in \cite[Chapter 6, \S4]{May} on products of NDR-pairs, taking the ``fibre product NDR-pair'' $(X,A)\times_B (C,\emptyset) = (X\times_B X, X\times_B \emptyset \cup A\times_B C) = (X\times_B C,A\times_B C)$.

\begin{propn}
\label{well-pointed--good}
Let $Y$ be a topological category. Then $NY$ is a good simplicial space if $(Y_1,Y_0)$ is an NDR-pair over $Y_0\times Y_0$ (using the diagonal and $(s,t)\colon Y_1 \to Y_0 \times Y_0$).
\end{propn}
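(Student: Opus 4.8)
The plan is to realise every degeneracy map of $NY$ as a base change of the given NDR-pair $(Y_1,Y_0)$ along a suitable map into $Y_0\times Y_0$, and then to invoke Lemma~\ref{cofib_over}.

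Recall that $NY_m$ is the space $Y_1\times_{Y_0}\cdots\times_{Y_0}Y_1$ of composable strings $x_0\xrightarrow{g_1}x_1\to\cdots\xrightarrow{g_m}x_m$ (with $NY_0=Y_0$), and that for $0\le i\le m$ the degeneracy $s_i\colon NY_m\to NY_{m+1}$ inserts $\id_{x_i}$ as the $(i+1)$-st arrow. Fix $m$ and $i$, put $D:=NY_i\times NY_{m-i}$, and let $\phi\colon D\to Y_0\times Y_0$ send a pair of strings to the target of the last arrow of the first string together with the source of the first arrow of the second string (reading the relevant component as $\id_{Y_0}$ when $i=0$, resp.\ $i=m$). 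Using associativity and commutativity of finite limits in $\CGH$ one obtains a homeomorphism $NY_{m+1}\cong D\times_{Y_0\times Y_0}Y_1$ over $Y_0\times Y_0$, where $Y_1$ is regarded as a space over $Y_0\times Y_0$ via $(s,t)$; under it, the subspace $D\times_{Y_0\times Y_0}Y_0$ obtained by pulling back along $e$ (equivalently along the diagonal, since $(s,t)\circ e=\Delta$) is carried exactly onto the image of $s_i$, and the induced identification $NY_i\times_{Y_0}NY_{m-i}\xrightarrow{\ \sim\ }D\times_{Y_0\times Y_0}Y_0$ is precisely $s_i$ composed with the homeomorphism above.

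Now apply Lemma~\ref{cofib_over} with base $B=Y_0\times Y_0$, to the NDR-pair $(Y_1,Y_0)$ over $B$ furnished by hypothesis and to the space $C=D$ over $B$ via $\phi$: this gives that $(D\times_{Y_0\times Y_0}Y_1,\,D\times_{Y_0\times Y_0}Y_0)$, i.e.\ $(NY_{m+1},s_i(NY_m))$, is an NDR-pair over $Y_0\times Y_0$. Composing the representing map $u$ with the projection $I\times(Y_0\times Y_0)\to I$ and keeping the representing homotopy unchanged yields an absolute NDR-pair: axioms (2) and (3) are unaffected, while axiom (1) holds because the preimage of $\{0\}$ under the composite equals $u^{-1}(\{0\}\times(Y_0\times Y_0))$. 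An NDR-pair has closed subspace (the preimage of $0$) and its inclusion is a cofibration (via the usual explicit retraction of $NY_{m+1}\times I$), so each $s_i$ is a closed cofibration. Since $m$ and $i\le m$ were arbitrary, $NY$ is good.

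I expect the only genuine work to be the bookkeeping in the second paragraph — pinning down the correct map $\phi\colon D\to Y_0\times Y_0$ and verifying that $s_i$ is literally the pullback of $e\colon Y_0\to Y_1$ along it — since everything after that is a direct application of Lemma~\ref{cofib_over} together with the standard equivalence between NDR-pairs and closed cofibrations in $\CGH$.
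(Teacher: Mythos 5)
Your proof is correct and follows essentially the same route as the paper: the paper's (very terse) proof consists precisely of exhibiting $s_i$ as the base change of $e\colon Y_0\to Y_1$ along the map $NY_i\times NY_{p-i}\to Y_0\times Y_0$ you call $\phi$, and then (implicitly) invoking Lemma~\ref{cofib_over} and the equivalence of NDR-pairs with closed cofibrations. Your write-up simply makes explicit the bookkeeping and the passage from a relative to an absolute NDR-pair that the paper leaves to the reader.
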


\begin{proof}
Consider for each $0\leq i\leq p$ the maps 
\begin{multline*}
    Y_p \simeq Y_0 \times_{Y_0\times Y_0} (Y_i\times Y_{p-i}) \stackrel{s_i}{\To} Y_1 \times_{Y_0\times Y_0} (Y_i\times Y_{p-i}) \simeq Y_{p+1}\\
    \shoveleft{(y;y_0\xrightarrow{a_1} \cdots \xrightarrow{a_i} y;
        y\xrightarrow{b_1} z_1 \to \cdots \xrightarrow{b_{p-i}} z_{p-i})} \\
        \mapsto (y_0 \xrightarrow{a_1}\cdots \xrightarrow{a_i} y\xrightarrow{\id_y}
        y\xrightarrow{b_1} \cdots \xrightarrow{b_{p-i}} z_{p-i})
\end{multline*}
inserting an identity in a string of composable arrows.
\end{proof}

\begin{defn}
\label{well-pointed}
A topological category is called \emph{well-pointed} if the condition in Proposition \ref{well-pointed--good} holds.
\end{defn}

Examples of well-pointed topological categories include well-pointed topological monoids or groups. 
A well-pointed $\CGH$-\emph{enriched} category as defined in \cite{Vogt_73} is well-pointed in the sense above if considered as a category internal to $\CGH$. 
Note that $C$ is well-pointed iff $C^{\op}$ is well-pointed.

\section{A span of bisimplicial spaces}

Given a functor $f\colon X \to Y$ between topological categories let us define a bisimplicial space $D = D(f)$ by the following:
\[
    D(f)_{pq} = NY^{\op}_p \times_{Y_0} Y_1 \times_{Y_0} NX_q
\]
where the $(p,q)$-simplices look like
\[
    \big( y_p \to \ldots \to y_0 \stackrel{\eta}{\to} f(x_0) ; x_0  \to \ldots \to x_q \big),
\]
and we consider $p$ as indexing the vertical direction, and $q$ indexing the horizontal direction.
The face maps $d_i^h,d_i^v,\ i \geq 1$ are induced from $NX$ and $NY^{\op}$ with no effect on the $Y_1$ term. 
The face maps $d_0^h, d_0^v$ are
\begin{multline*}
d_0^h\big( y_p \to \ldots \to y_0 \stackrel{\eta}{\to} f(x_0) ; x_0  \stackrel{\nu}{\to} \ldots \to x_q \big)\\
    = \big( y_p \to \ldots \to y_0 \stackrel{f(\nu)\eta}{\longrightarrow} f(x_1) ; x_1  \to \ldots \to x_q \big) 
\end{multline*}
\begin{multline*}
    d_0^v\big( y_p \to \ldots \stackrel{\kappa}{\to} y_0 \stackrel{\eta}{\to} f(x_0) ; x_0  \to \ldots \to x_q \big) \\
    =
    \big( y_p \to \ldots \to y_1 \stackrel{\eta\kappa}{\longrightarrow} f(x_0) ; x_0  \to \ldots \to x_q \big).
\end{multline*}

\noindent
The degeneracy maps $s_i^h,s_i^v,\ i\geq 1$ are likewise induced from $NX$ and $NY^{\op}$.
The most important thing to note about the degeneracy maps is that they are all the identity on the $Y_1$ factor.
We get a span of bisimplicial spaces that on $(p,q)$ simplices looks like
\[
    NX_q \leftarrow D(f)_{pq} \to NY_p^{\op}
\]
where the codomains are constant in one direction.

For fixed $p$, the simplicial space $D_{p\bullet}$ is the fibred product of $N(Y_0\sl f$) and the constant simplicial space $NY_p^{\op}$. 
Thus the horizontal realisation $|D|_h$ of $D$ is a simplicial space with $p$-simplices
\begin{equation}\label{eq:ssSpace_span}
    (|D|_h)_p = NY^{\op}_p \times_{Y_0} B(Y_0\sl f),
\end{equation}
such that the degeneracy maps are given by the fibred product of the degeneracy maps for $NY^{\op}$ and the identity map for $B(Y_0\sl f)$. 
We can therefore apply Lemma~\ref{cofib_over} and so if $Y$ is well-pointed, then $NY^{\op}$ and hence $|D|_h$ are good.

There is a map of bisimplicial spaces $D \to NY^{\op}$ (where we think of the latter as being constant in the $q$-direction) which is simply the projection map. 
The following proposition is proved in \cite[\S1]{Quillen_73} and used in the special case that all spaces involved are discrete:

\begin{propn}\label{prop:bisimplicial_realisation}
If $T$ is a bisimplicial space, then there are natural isomorphisms
\[
    \big||T|_h\big| \simeq |dT| \simeq \big||T|_v\big|.
\]
\end{propn}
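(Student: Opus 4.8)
The plan is to prove that all three spaces are naturally homeomorphic by first collapsing both iterated realisations into a single ``double coend'', and then collapsing that coend along the diagonal. The categorical inputs needed are all standard, but each of them relies on working in $\CGH$.

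Recall that the (standard) geometric realisation of a simplicial space $X_\bullet$ is the coend
\[
    |X_\bullet| \;=\; \int^{[n]\in\Delta} X_n\times\Delta^n
\]
taken in $\CGH$, where $\Delta$ is the simplex category and $[n]\mapsto\Delta^n$ is the topological cosimplicial space. By definition the horizontal realisation $|T|_h$ is the simplicial space $[p]\mapsto\int^{[q]}T_{pq}\times\Delta^q$, so $\big||T|_h\big| = \int^{[p]}\big(\int^{[q]}T_{pq}\times\Delta^q\big)\times\Delta^p$. Since $\CGH$ is cartesian closed, $(-)\times Z$ preserves colimits for every space $Z$; combining this with the interchange law for iterated coends (Fubini) gives a natural homeomorphism
\[
    \big||T|_h\big|\;\cong\;\int^{([p],[q])\in\Delta\times\Delta} T_{pq}\times\Delta^p\times\Delta^q .
\]
The identical computation applied to $|T|_v$ produces the very same double coend, which is manifestly symmetric in the two variables, so $\big||T|_h\big|\cong\big||T|_v\big|$; this is the right-hand homeomorphism in the statement.

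It remains to identify the double coend with $|dT| = \int^{[n]}T_{nn}\times\Delta^n$. Geometric realisation carries $\Delta[p]\times\Delta[q]$ to $\Delta^p\times\Delta^q$ (the realisation of a product of simplicial sets is the product of the realisations, in $\CGH$), where $\Delta[p]=\hom_\Delta(-,[p])$, so writing out this realisation gives
\[
    \Delta^p\times\Delta^q\;\cong\;\int^{[n]}\big(\hom_\Delta([n],[p])\times\hom_\Delta([n],[q])\big)\times\Delta^n .
\]
Substituting this into the double coend, using once more that products with a fixed space preserve colimits, and interchanging coends, one obtains
\[
    \big||T|_h\big|\;\cong\;\int^{[n]}\Big(\int^{([p],[q])} T_{pq}\times\hom_\Delta([n],[p])\times\hom_\Delta([n],[q])\Big)\times\Delta^n .
\]
Here $\hom_\Delta([n],[p])\times\hom_\Delta([n],[q]) = \hom_{\Delta\times\Delta}\big(([n],[n]),([p],[q])\big)$ is a set, so the bracketed inner coend is precisely the density (co-Yoneda) formula for the presheaf $T\colon(\Delta\times\Delta)^{\op}\to\CGH$ evaluated at $([n],[n])$, hence canonically homeomorphic to $T_{nn}$; moreover this identification is natural in $[n]$, and the resulting $[n]$-variance is exactly the diagonal simplicial structure on $dT$. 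Therefore $\big||T|_h\big|\cong\int^{[n]}T_{nn}\times\Delta^n = |dT|$. Each homeomorphism above is assembled from coends, finite products and realisations, so all are natural in $T$.

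The main obstacle is organisational rather than conceptual: one must track the nested coends and the two Fubini interchanges carefully, and check that each ingredient — product-with-a-space preserving colimits, realisation of simplicial sets preserving finite products, and the co-Yoneda formula with set-valued weights — is being applied legitimately. The first two of these fail in $\Top$ and hold in $\CGH$, so this proposition is one more place where the standing choice of $\CGH$ is doing essential work.
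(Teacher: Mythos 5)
Your proof is correct, and it is essentially the argument the paper is implicitly invoking: the paper gives no proof of its own, only a citation to Quillen, whose lemma for bisimplicial sets rests on exactly this density/coend computation, which you have correctly transported to $\CGH$-valued bisimplicial objects by checking the two points where the topology matters (colimit-preservation of $-\times Z$ and $|\Delta[p]\times\Delta[q]|\cong\Delta^p\times\Delta^q$). The only quibble is cosmetic: since $\Delta[p]\times\Delta[q]$ is a finite simplicial set, the product-of-realisations step already holds in $\Top$; it is the Fubini/interchange step for the $\CGH$-valued coends that genuinely needs the cartesian closed ambient category.
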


Here $d$ is the diagonal functor, which sends the bisimplicial space $\{T_{pq}\}$ to the simplicial space $\{T_{pp}\}$. 
We have constructed $D(f)$ such that $dD(f) = N(S(f))$, and as a result the diagonal functor applied to the span (\ref{eq:ssSpace_span}) gives (the nerve of) the top row of (\ref{map_of_spans}).
As a particular example of this, we have $dD(\id_Y) = N(\natural Y)$, and hence also get the bottom row of (\ref{map_of_spans}).

\begin{defn}[Dold \cite{Dold}]
\label{shrinkable}
A map $p\colon E \to B$ is \emph{shrinkable} if there is a section $s\colon  B \to E$ of $p$ such that $s\circ p$ is fibrewise homotopic to $\id_E$.
\end{defn}

The geometric realisation of a functor with a left or right adjoint section is shrinkable, as the (co)unit natural transformation geometrically realises to the required fibrewise homotopy.

When we apply the horizontal geometric realisation functor to the map $D \to NY^{\op}$ we get a map $\beta$ of simplicial spaces which at each level looks like
\[
    \beta_p\colon(|D|_h)_p = NY^{\op}_p \times_{Y_0} B(Y_0\sl f) \to NY^{\op}_p \times_{Y_0}Y_0 = NY^{\op}_p.
\]

Now if $B(Y_0 \sl f) \to Y_0$ is shrinkable, $\beta_p$ is a homotopy equivalence, as the pullback of a shrinkable map is shrinkable. 
Then, $\beta$ is a map of simplicial spaces which is a homotopy equivalence at each level. 
Given that $NY^{\op}$ and $NY^{\op} \times_{Y_0} B(Y_0\sl f)$ are good, we know that $|\beta|$ is a homotopy equivalence by Proposition~\ref{prop:geom_realise_good_sspace}.
But $|\beta|$ is secretly $BP_f$ (using Proposition~\ref{prop:bisimplicial_realisation}), the map we wanted to show was a homotopy equivalence.
Further, in the special case that we take $f=\id_Y$, then as $BTY \to Y_0$ is shrinkable (since $TY\to \disc(Y_0)$ has a left adjoint section), then the map $B\dom^\natural \colon B\natural Y \to BY^{\op}$ is also a homotopy equivalence.
And we also have the analogous result for $B\cod^\natural \colon B\natural Y \to BY$, since $\cod^{T^o}\colon T^oY\to \disc(Y_0)$ has a right adjoint section.

Finally, we need to show that $BQ_f\colon BS(f) \to BX$ is a homotopy equivalence.
This follows since the \emph{vertical} realisation $|D|_v$ is the simplicial space $BT^oY\times_{Y_0} NX$, which is good if $X$ is well-pointed, and the projection to $NX$ is at each level the shrinkable map
\[
    BT^oY\times_{Y_0} NX_q \to Y_0 \times_{Y_0} NX_q = NX_q
\]
that is the pullback of the shrinkable map $BT^oY\to Y_0$.
Then applyng Propositions~\ref{prop:bisimplicial_realisation} and \ref{prop:geom_realise_good_sspace} again we have that $BQ_f\colon BS(f) = |dS(f)| \simeq \big||D|_v\big| \to BX$ is a homotopy equivalence.

Thus we have shown:

\begin{thma}
If $f\colon X \to Y$ is a functor between well-pointed topological categories such that $B\rho\colon B(Y_0 \sl f) \to Y_0$ is shrinkable, $Bf$ is a homotopy equivalence.
\end{thma}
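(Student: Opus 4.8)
The plan is to read off the conclusion from the span of categories in (\ref{map_of_spans}). Since that diagram commutes, it suffices to prove that the four horizontal functors $P_f$, $Q_f$, $\cod^\natural$ and $\dom^\natural$ all become homotopy equivalences after applying $B$, and then to chase the two commuting squares: from the left square one has $Bf\circ BQ_f = B\cod^\natural\circ B\widehat{f}$, so with $BQ_f$ and $B\cod^\natural$ homotopy equivalences, $Bf$ is a homotopy equivalence exactly when $B\widehat{f}$ is; from the right square one has $BP_f = B\dom^\natural\circ B\widehat{f}$, so with $BP_f$ and $B\dom^\natural$ homotopy equivalences the 2-out-of-3 property makes $B\widehat{f}$ a homotopy equivalence. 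Combining the two gives that $Bf$ is a homotopy equivalence, which is the assertion. It is here, and only here, that the shrinkability hypothesis on $B\rho$ enters, namely to produce $BP_f$.

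For the four homotopy equivalences I would use the bisimplicial space $D=D(f)$. Horizontally realising the projection $D\to NY^{\op}$ gives a map $\beta$ of simplicial spaces whose $p$-th level is, by (\ref{eq:ssSpace_span}), the pullback of $B\rho\colon B(Y_0\sl f)\to Y_0$ along $NY^{\op}_p\to Y_0$; pullbacks of shrinkable maps are shrinkable, so each $\beta_p$ is a homotopy equivalence, and since $Y$ is well-pointed Lemma~\ref{cofib_over} shows (via the explicit degeneracies in (\ref{eq:ssSpace_span})) that both $|D|_h$ and the constant simplicial space $NY^{\op}$ are good, whence $|\beta|$ is a homotopy equivalence by Proposition~\ref{prop:geom_realise_good_sspace}; Proposition~\ref{prop:bisimplicial_realisation} identifies $|\beta|$ with $BP_f$. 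The same recipe with the vertical realisation, which the construction of $D$ identifies with $BT^oY\times_{Y_0}NX$, handles $BQ_f$: the relevant shrinkable map is now $BT^oY\to Y_0$, shrinkable because $\cod^{T^o}\colon T^oY\to\disc(Y_0)$ has the right adjoint section $\tau$, and goodness holds because $X$ is well-pointed. Finally, $B\dom^\natural$ and $B\cod^\natural$ are the $f=\id_Y$ cases of the $BP_f$ and $BQ_f$ arguments respectively, using for the former that $\dom^T\colon TY\to\disc(Y_0)$ has the left adjoint section $\sigma$, so that $BTY\to Y_0$ is shrinkable.

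The step I expect to be delicate is not any single one of these homotopy equivalences in isolation but the simplicial bookkeeping that glues them together: one must be confident that the degeneracy maps of $|D|_h$ (and of $|D|_v$) are honestly fibred products of the degeneracies of $NY^{\op}$ (resp.\ $NX$) with identity maps, which is exactly what licenses Lemma~\ref{cofib_over} and hence the goodness claims, and one needs the natural isomorphisms of Proposition~\ref{prop:bisimplicial_realisation} to identify $|\beta|$ with $BP_f$ and the vertical analogue with $BQ_f$, compatibly with the maps in (\ref{map_of_spans}), so that the concluding 2-out-of-3 chase is legitimate. Once those identifications are pinned down, the remaining ingredients — that an adjoint section realises to a shrinkable map, and that shrinkability and levelwise homotopy equivalence of good simplicial spaces are stable under the pullbacks in play — are precisely the facts already recorded above.
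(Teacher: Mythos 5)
Your proposal is correct and follows essentially the same route as the paper: the span (\ref{map_of_spans}), the bisimplicial space $D(f)$ with its horizontal and vertical realisations, goodness via well-pointedness and Lemma~\ref{cofib_over}, shrinkability of the levelwise maps (from the hypothesis on $B\rho$ for $P_f$ and from the adjoint sections of $\dom^T$ and $\cod^{T^o}$ for the remaining legs), and the concluding 2-out-of-3 chase. The bookkeeping points you flag as delicate are exactly the ones the paper addresses via Propositions~\ref{prop:geom_realise_good_sspace} and \ref{prop:bisimplicial_realisation}.
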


We have thus reduced the problem of showing $BX$ is homotopy equivalent to $BY$ to showing that the classifying space of a single topological category is homotopy equivalent to a given space. 

We can in fact do better than this:

\begin{thmaprime}
If $f\colon X\to Y$ is a functor between well-pointed topological categories such that $B\rho\colon B(Y_0 \sl f) \to Y_0$ is an acyclic Serre fibration, $Bf$ is a weak homotopy equivalence.
\end{thmaprime}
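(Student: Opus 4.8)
The plan is to run exactly the argument of Theorem A, but replacing "shrinkable" by "acyclic Serre fibration" and "homotopy equivalence" by "weak homotopy equivalence" throughout, checking that each of the three ingredients used in the proof of Theorem A survives this weakening. The three places where shrinkability was used are: (i) the pullbacks $\beta_p$ and the analogous maps for $|D|_v$ are level-wise equivalences because the pullback of a shrinkable map is shrinkable; (ii) Proposition~\ref{prop:geom_realise_good_sspace} upgrades a level-wise equivalence of good simplicial spaces to an equivalence of realisations; (iii) Proposition~\ref{prop:bisimplicial_realisation} identifies the realisations $\big||D|_h\big|$, $|dD|$, $\big||D|_v\big|$. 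Ingredient (iii) is purely formal and needs no change. For (i), I would use that acyclic Serre fibrations are stable under pullback along arbitrary maps (this is a standard closure property of the right class of the $(\mathrm{cof},\mathrm{triv.fib})$ weak factorisation system on $\CGH$, or equivalently follows by a direct lifting argument). Hence $\beta_p$ and the level maps $BT^oY\times_{Y_0}NX_q\to NX_q$ are acyclic Serre fibrations, in particular weak homotopy equivalences, for each $p$ and $q$.

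For (ii), the issue is that Segal's Proposition~\ref{prop:geom_realise_good_sspace} is stated for \emph{homotopy} equivalences, so I need the analogous statement for \emph{weak} homotopy equivalences: if $f\colon M\to N$ is a level-wise weak homotopy equivalence of good simplicial spaces then $|f|$ is a weak homotopy equivalence. This is well known — it follows because geometric realisation of good (equivalently, Reedy cofibrant, since good simplicial spaces are Reedy cofibrant) simplicial spaces computes the homotopy colimit, and homotopy colimits preserve level-wise weak equivalences; alternatively one cites the fact that $|{-}|$ sends level-wise weak equivalences between proper (Segal's "good") simplicial spaces to weak equivalences, e.g. via the skeletal filtration and the gluing lemma for weak equivalences along cofibrations. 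I would state this as a remark or small lemma immediately before the proof, with a citation (May, Segal, or Bousfield–Kan), rather than reprove it. The goodness hypotheses needed — that $NY^{\op}$, $NY^{\op}\times_{Y_0}B(Y_0\sl f)$ and $BT^oY\times_{Y_0}NX$ are good — are exactly the ones already verified in the proof of Theorem A under the well-pointedness of $X$ and $Y$, using Lemma~\ref{cofib_over}, and require no change.

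With these three ingredients in hand the proof is a transcription. Applying horizontal realisation to $D(f)\to NY^{\op}$ gives $\beta$, a level-wise weak homotopy equivalence of good simplicial spaces (by (i) and the goodness already established), so $|\beta|$ is a weak homotopy equivalence by the weak-equivalence version of Proposition~\ref{prop:geom_realise_good_sspace}; by Proposition~\ref{prop:bisimplicial_realisation} this is $BP_f\colon BS(f)\to BY^{\op}$. Symmetrically, vertical realisation gives that $BQ_f\colon BS(f)\to BX$ is a weak homotopy equivalence, using that $BT^oY\to Y_0$ — being the $f=\mathrm{id}$ case of $B\rho$ for $Y^{\op}$, or directly the realisation of a functor with an adjoint section, hence shrinkable, hence an acyclic Serre fibration — pulls back to a level-wise acyclic Serre fibration over $NX$. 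Finally, $BP_f$ and $BQ_f$ are weak homotopy equivalences, and composing $\widehat{f}$ with the map $B\dom^\natural\colon B\natural Y\to BY^{\op}$ (a homotopy equivalence by the Theorem A computation applied to $\mathrm{id}_Y$) against $\cod^\natural$, the commuting diagram~(\ref{map_of_spans}) plus two-out-of-three for weak homotopy equivalences forces $Bf$ to be a weak homotopy equivalence. The only genuine obstacle is ingredient (ii): making sure the cited level-wise-weak-equivalence-to-realisation statement really is available in the generality of good (not necessarily Reedy cofibrant in the strong sense, but proper) simplicial spaces in $\CGH$; everything else is the stability of acyclic fibrations under pullback and bookkeeping already done for Theorem A.
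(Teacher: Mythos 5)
Your proposal is correct and follows essentially the same route as the paper: stability of acyclic Serre fibrations under pullback (via the lifting-property characterisation) handles the level-wise maps, and the realisation step is covered by the fact that a level-wise weak equivalence of proper (in particular good) simplicial spaces realises to a weak equivalence, which the paper cites from May's $E_\infty$-spaces notes together with Lewis's result that good implies proper; the rest is the same two-out-of-three bookkeeping in diagram~(\ref{map_of_spans}).
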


\begin{proof}
A map of \emph{proper} simplicial spaces which is a weak equivalence in each dimension geometrically realises to a weak homotopy equivalence (see \cite[A.4]{May_74}), and a good simplicial space is proper \cite{Lewis}.
Further, acyclic Serre fibrations are stable under pullback (as they are characterised by a right lifting property), so that the map $\beta_p$ is an acyclic Serre fibration when $B\rho$ is one.
Then $|\beta|=BP_f$ is weak homotopy equivalence, and using the 2-out-of-3 property of weak homotopy equivalences in the geometric realisation of the diagram (\ref{map_of_spans}), so is $B\widehat{f}$, and hence so is $Bf$, as desired.
\end{proof}

To remove the condition that $Y$ is well-pointed, we would need to use \emph{fat realisation}, which models the homotopy colimit of a simplicial topological space. 
It is known that the fat realisation of a levelwise weak homotopy equivalence is a weak homotopy equivalence \cite[Appendix A]{Segal_74}, so this step works.
This then leads to thinking about how to commute homotopy colimits past each other, specifically, the diagonal then fat realisation, and fat horizontal realisation then fat realisation, at the cost of the two constructions only being weakly homotopy equivalent.

Private discussion with J.~Scherer leads me to believe it should be possible to generalise Theorem A' and remove the hypothesis of well-pointedness of $Y$, but I leave this as an challenge for the motivated reader.

\section{Weak equivalences give homotopy equivalences}

Recall, firstly, that an open cover which admits a subordinate partition of unity is called \emph{numerable} \cite{Dold}. 
Define the singleton Grothendieck pretopology $\mathcal{O}_{\mathrm{num}}$ on $\CGH$ of `numerable covers' to have the covering maps of a (CGH) topological space $M$ to be those local homeomorphisms $U:= \coprod_\alpha U_\alpha \to M$ arising from a numerable open cover $\{U_\alpha\}$ of $M$.

Secondly, a functor $f\colon X \to Y$ is \emph{fully faithful} if the diagram
\begin{equation}\label{ff}
\raisebox{18pt}{
    \xymatrix{
        X_1 \ar[r]^{f_1} \ar[d]_{(s,t)} & Y_1 \ar[d]^{(s,t)} \\
        X_0 \times X_0 \ar[r]_{f_0} & Y_0 \times Y_0
    }
}
\end{equation}
is a pullback, and is \emph{essentially $\mathcal{O}_{\mathrm{num}}$-surjective} if $\rho_0$ in the diagram
\begin{equation}\label{locally-split_eso}
\raisebox{18pt}{    
    \xymatrix{
        X_0 \ar[d]_{f_0}  &X_0 \times_{Y_0} Y_1^{iso} \ar[l] \ar[d] \ar@/^1.8pc/[dr]^{\rho_0}& \\
        Y_0 & Y_1^{iso}  \ar[l]^s \ar[r]_t&Y_0 
    }
}
\end{equation}
admits local sections relative to some numerable cover $\pi_0\colon U \to Y_0$. 
That is, there is a map $(s_X,s_Y)\colon U\to X_0 \times_{Y_0} Y_1^{iso}$ such that $t(s_Y(u))=\pi_0(u)$ for all $u\in U$.
Note that here $Y_1^{iso}\subseteq Y_1$ is the subspace of invertible arrows. 
Let $U^{[2]}$ denote the topological groupoid $U\times_M U \ST U$ attached to a numerable cover $U \to M$, which comes equipped with a functor $\pi\colon U^{[2]} \to \disc(M)$.

\begin{propn}[Segal \cite{Segal_68}]\label{prop:Segal_shrinkable}
If $U \to M$ is a numerable cover of a space $M$, then $B\pi\colon BU^{[2]} \to M$ is shrinkable.
\end{propn}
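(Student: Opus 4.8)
The statement to prove is Proposition~\ref{prop:Segal_shrinkable}: if $U$ is a numerable open cover of a space $M$, then the canonical map $\pi\colon BU^{[2]}\to M$ is shrinkable. The plan is to construct explicitly a section $s\colon M \to BU^{[2]}$ of $\pi$ together with a fibrewise homotopy from $s\circ\pi$ to $\id_{BU^{[2]}}$, using the partition of unity subordinate to $U$. First I would recall that $NU^{[2]}_p = U\times_M U \times_M\cdots\times_M U$ ($p+1$ factors), so a point of $BU^{[2]}$ lying over $m\in M$ is represented by a tuple $(t_0,\ldots,t_p;u_0,\ldots,u_p)$ with $u_i$ all lying in the same fibre over $m$ and $\sum t_i = 1$; two such represent the same point under the usual simplicial identifications. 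The numerable cover comes with a partition of unity $\{\phi_\alpha\colon M\to[0,1]\}$ indexed by (a locally finite refinement of) the cover, with $\sum_\alpha\phi_\alpha = 1$ and $\mathrm{supp}\,\phi_\alpha$ contained in the $\alpha$-th open set $U_\alpha$.

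The section $s$ is the classical one: for $m\in M$, let $\alpha_0,\ldots,\alpha_p$ enumerate (in some fixed order on the index set) those indices with $\phi_{\alpha_i}(m)>0$; then $m$ lies in each $U_{\alpha_i}$, so we get points $u_i\in U$ over $m$, and we set $s(m) = [\phi_{\alpha_0}(m),\ldots,\phi_{\alpha_p}(m);u_0,\ldots,u_p]\in BU^{[2]}$. One checks this is well-defined and continuous (local finiteness makes it locally a finite expression, and crossing the boundary where some $\phi_{\alpha_i}$ vanishes is absorbed by the simplicial face identification that deletes a vertex with barycentric coordinate $0$), and visibly $\pi\circ s = \id_M$ since all the $u_i$ sit over $m$.

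For the fibrewise homotopy $H\colon BU^{[2]}\times I\to BU^{[2]}$ from $s\circ\pi$ to $\id$: given a point $[t_0,\ldots,t_p;u_0,\ldots,u_p]$ over $m$, write $s(m)=[\psi_0,\ldots,\psi_q;v_0,\ldots,v_q]$ with $\psi_j=\phi_{\alpha_j}(m)$ and $v_j\in U$ over $m$. All the $u_i$ and $v_j$ lie over the same point $m$, so the concatenated tuple $(u_0,\ldots,u_p,v_0,\ldots,v_q)$ is again a valid $(p+q+1)$-simplex of $NU^{[2]}$. Define $H$ by linearly interpolating the barycentric coordinates: at time $\lambda$, use weights $\big((1-\lambda)t_0,\ldots,(1-\lambda)t_p,\lambda\psi_0,\ldots,\lambda\psi_q\big)$ on the tuple $(u_0,\ldots,u_p,v_0,\ldots,v_q)$. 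This is a path in the geometric realisation, it starts (at $\lambda=0$) at the original point and ends (at $\lambda=1$) at $s(m)=s(\pi(\text{point}))$, it is manifestly fibrewise (everything stays over $m$), it is continuous, and it is natural/compatible with the simplicial identifications so that it descends to $BU^{[2]}$.

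The main obstacle is the continuity and well-definedness bookkeeping rather than any conceptual difficulty: one must check that $s$ and $H$ are continuous across the loci where barycentric coordinates enter or leave the support, which is exactly where the simplicial face-degeneracy relations in the geometric realisation must be invoked, and one must confirm that $H$ respects those relations so that it is genuinely a map out of $BU^{[2]}$ and not merely out of a presimplicial model. Since this is the standard argument that a numerable cover gives a ``shrinkable'' (equivalently, a map admitting a section which is a fibrewise deformation retraction) map — essentially Segal's observation that $BU^{[2]}\to M$ is the universal example and is a fibrewise contractible acyclic fibration — I would either carry out the coordinate check in a short explicit lemma or simply cite \cite{Segal_68} (or \cite{Dold}) for the details, noting that the only input is the existence of the subordinate partition of unity, which is precisely numerability.
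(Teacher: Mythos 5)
Your construction is correct and is exactly the argument behind the result as cited: the paper offers no proof of this proposition, deferring entirely to Segal \cite{Segal_68}, and your section built from a subordinate partition of unity together with the linear barycentric interpolation onto the concatenated simplex is the standard proof found there (and in Dold's treatment), with the continuity and quotient-compatibility checks you flag being precisely the bookkeeping those sources carry out. No gaps beyond the routine verifications you already identify.
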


We now come to the main application of the paper, generalising Segal's result to functors between topological categories. 

\begin{thm}\label{thm:Morita}
If $f\colon X \to Y$ is a fully faithful, essentially $\mathcal{O}_{\mathrm{num}}$-surjective functor between well-pointed topological categories, then $Bf$ is a homotopy equivalence.
\end{thm} 

\begin{proof}
Because (\ref{ff}) is a pullback, there is an \emph{isomorphism} of topological categories\footnote{Recall that given a space $S$, the topological category $\codisc(S)$ has as objects $S$ and a unique morphism between any ordered pair of elements of $S$.}
\[
    X \simeq \codisc(X_0) \times_{\codisc(Y_0)} Y.
\]
It immediately follows that 
\[
    Y_0 \sl f \simeq \codisc(X_0) \times_{\codisc(Y_0)} TY,
\]
and using the local sections $s$ of $\rho_0$ we can construct a functor $\sigma\colon U^{[2]} \to \codisc(X_0) \times_{\codisc(Y_0)} TY$, in the following way.

First note that there is a map 
\begin{align*}
    U\times_{Y_0} U &\to X_0 \times_{Y_0} Y_1^{iso} \times_{Y_0} X_0 \times_{Y_0} Y_1^{iso}\\
    (u,v)& \mapsto (s_X(u),f(s_X(u)) \xrightarrow{s_Y(u)} \pi(u);s_X(v),f(s_X(v)) \xrightarrow{s_Y(v)} \pi(v))
\end{align*}
where $\pi(u) = \pi(v)$.
If we define $s'(u,v) = s_Y(v)^{-1}\circ s_Y(u)$, then this gives a commuting triangle
\[
    \xymatrix{
        \pi(u) \ar[r]^{s_Y(u)^{-1}} \ar@/_.5pc/[dr]_{s_Y(v)^{-1}}& f(s_X(u)) \ar[d]^{s'(u,v)} \\
        & f(s_X(v))
    }
\]
which is a morphism in $TY$. 
Together with the pair $(s_X(u),s_X(v))\in \Mor(\codisc(X_0))$ this defines a map 
\[
    \sigma_1\colon U\times_{Y_0} U=\Mor(U^{[2]}) \to \Mor(\codisc(X_0) \times_{\codisc(Y_0)} TY), 
\]
which is the morphism component of the functor 
\[
    \sigma\colon U^{[2]} \to \codisc(X_0) \times_{\codisc(Y_0)} TY.
\] 
The object component is given by the map $\sigma_0\colon U\to X_0\times_{Y_0}Y_1$ of the form
\[
    u \mapsto (s_X(u),\pi(u)  \xrightarrow{s_Y(u)^{-1}}f(s_X(u))),
\]
and one can easily check using the definition of $s'$ that this is indeed an internal functor.
Moreover, the functor $\sigma$ satisfies the property that $\rho\circ\sigma$ is equal to $\pi\colon U^{[2]}\to \disc(Y_0)$.

Let $P$ be the strict pullback
\[
    \xymatrix{
        P  \ar[r] \ar[d]_{p_U} & \codisc(X_0) \times_{\codisc(Y_0)} TY \ar[d]^\rho \\
        U^{[2]} \ar[r]_{\pi} & \disc(Y_0)\; .
    }
\]
The section $\sigma$ induces a section $\tau$ of $p_U$, for which there is a natural transformation $\tau\circ p_U \Rightarrow \id_P$; the component at $(u;\pi(u)\xrightarrow{g} f(x);x)$ is given by a tuple (representing a morphism of $P$) consisting of the pair $(u,u)\in \Mor(U^{[2]})$, the pair $(s_X(u),x)$, and the commuting triangle
\[
    \xymatrix{
        \pi(u) \ar[r]^{s_Y(u)^{-1}} \ar@/_.5pc/[dr]_{g}& f(s_X(u)) \ar[d]^{gs_Y(u)} \\
        & f(x)
    } 
\]
Thus we see that $Bp_U$ is a homotopy equivalence by Proposition \ref{prop:natransf_homotopy}, as $B\tau\circ Bp_U = \id_{BU^{[2]}}$ and the natural transformation gives rise to a homotopy from $Bp_U\circ B\tau$ to $\id_{BP}$.

But, further, the natural transformation $\tau\circ p_U \Rightarrow \id_{P}\colon P\to P$ satisfies the condition that the corresponding functor $h\colon P\times \mathbf{2} \to P$ makes the diagram
\[
    \xymatrix{
        P \times \mathbf{2} \ar[rr] \ar[dr] && P \ar[dl]^{p_U} \\
        &U^{[2]}
    }
\]
commute, where the left diagonal map is the composition of the projection and $p_U$. 
This is because the $U^{[2]}$-component of the natural transformation is the identity arrow on the object $u$.
Thus $Bh$ is a fibrewise homotopy over $BU^{[2]}$, and hence $Bp_U$ is shrinkable.

When we pass to geometric realisations,
\begin{equation}\label{pullback_of_homotopy_fibres}
\raisebox{16pt}{
    \xymatrix{
        BP \ar[r] \ar[d]_{Bp_U} & B(\codisc(X_0) \times_{\codisc(Y_0)} TY) \ar[d]^{B\rho} \\
        BU^{[2]} \ar[r] & Y_0
    }
}
\end{equation}
it turns out that $B\rho$ is a retract of $Bp_U$, as follows.
Recall that $B\pi$ is shrinkable (with section $s'$, say) by Proposition~\ref{prop:Segal_shrinkable}, as we assumed $U\to Y_0$ is a numerable cover and then applying the pullback pasting lemma to the diagram
\[
    \xymatrix{
        Y_0 \times_{BU^{[2]}} BP \ar[d]_{\pr_1} \ar[r]^-{\pr_2} & BP \ar[r]^-r \ar[d]_{Bp_U} & B(\codisc(X_0) \times_{\codisc(Y_0)} TY) \ar[d]^{B\rho} \\
        Y_0 \ar[r]_{s'}& BU^{[2]} \ar[r]_{\pi} & Y_0 
    }
\]
we see that $Y_0 \times_{BU^{[2]}} BP \simeq B(\codisc(X_0) \times_{\codisc(Y_0)} TY)$ over $Y_0$. 
Thus $\pr_1$ can be identified with $B\rho$, and the composite of the top horizontal arrows with identity map.
The section of $B\rho$ is $r\circ B\tau\circ s'$, and it is then a short calculation to check that $B\rho$ is shrinkable.
As a result we can apply Theorem A to $f$ to conclude that $Bf$ is a homotopy equivalence.
\end{proof}

Because every open cover of a paracompact Hausdorff space is refined by a numerable one, an immediate corollary is that if $Y_0$ is paracompact\footnote{Note that $Y_0$ is always Hausdorff by the choice of $\CGH$ as our category of spaces.}, then local sections of $\rho_0$ over any open cover will suffice for the conclusion of Theorem~\ref{thm:Morita}.

\section{Postscript: homotopy types of topological stacks of categories}

The previous sections were written as a precursor to the content of my PhD thesis later published as \cite{Roberts_12}, and so did not benefit from the idea of stacks presentable by internal categories and Pronk's notion of bicategorical localisation. 
With that machinery, one can give the following corollary to Theorem~\ref{thm:Morita} (using judicious amounts of Global Choice). Assume now that all topological categories are well-pointed.

\begin{corollary}\label{cor:classifying_space_stack_cats}
The classifying space 2-functor $B\colon \Cat(\CGH)\to \CGH_2$ extends to give a classifying space 2-functor for presentable stacks of categories on the site $(\CGH,\mathcal{O}_{\mathrm{num}})$.
\end{corollary}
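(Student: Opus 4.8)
The plan is to exhibit $\St^{pres}(\CGH,\mathcal{O}_{\mathrm{num}})$ as a bicategory of fractions of $\Cat(\CGH)$ and then extend $B$ by the universal property of such localisations. First I would recall the identification, coming from the internal-category presentation of stacks together with Pronk's theory of bicategorical localisation (see \cite{Roberts_12}), of the $2$-category of presentable stacks of categories on the site $(\CGH,\mathcal{O}_{\mathrm{num}})$ with the bicategory $\Cat(\CGH)[W^{-1}]$ obtained from (well-pointed) topological categories by formally inverting the class $W$ of weak equivalences, i.e.\ the fully faithful, essentially $\mathcal{O}_{\mathrm{num}}$-surjective functors. Here $1$-morphisms are represented by spans $X \leftarrow Z \to Y$ whose backwards leg lies in $W$, $2$-morphisms by suitable equivalence classes of spans between spans, and the key input is that $W$ admits a bicategorical calculus of (right) fractions inside $\Cat(\CGH)$. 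The canonical pseudofunctor $\Cat(\CGH) \to \Cat(\CGH)[W^{-1}]$ sends a topological category to the stack it presents and is the universal pseudofunctor inverting $W$.

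Next I would identify the equivalences in $\CGH_2$: a continuous map $f\colon A\to B$ is an equivalence in $\CGH_2$ precisely when there is a map $g\colon B\to A$ and homotopies $gf\simeq \id_A$, $fg\simeq \id_B$ (invertible $2$-cells), i.e.\ exactly when $f$ is a homotopy equivalence. Theorem~\ref{thm:Morita} states that $B\colon \Cat(\CGH)\to\CGH_2$ carries every functor in $W$ to a homotopy equivalence --- this is where the standing hypothesis that all topological categories are well-pointed is used --- so $B$ sends all of $W$ to equivalences in $\CGH_2$.

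Combining these, Pronk's universal property yields a pseudofunctor $\Cat(\CGH)[W^{-1}] \to \CGH_2$, essentially unique, whose precomposition with $\Cat(\CGH) \to \Cat(\CGH)[W^{-1}]$ is equivalent to $B$. Transporting along the equivalence $\St^{pres}(\CGH,\mathcal{O}_{\mathrm{num}}) \simeq \Cat(\CGH)[W^{-1}]$ gives the asserted classifying space $2$-functor $\St^{pres}(\CGH,\mathcal{O}_{\mathrm{num}}) \to \CGH_2$ extending $B$. Realising this $2$-functor concretely means choosing, over the proper class of all presentable stacks, a well-pointed topological category presenting each one and a representing span for each morphism --- this is the use of Global Choice flagged in the statement.

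The main obstacle is not the formal extension but the verification underlying the first paragraph: that $W$, taken within well-pointed topological categories, still admits a calculus of fractions --- in particular that the span composites and weak pullbacks needed can be arranged to stay well-pointed when built from numerable covers --- and that the resulting localisation is genuinely the $2$-category of presentable stacks on $(\CGH,\mathcal{O}_{\mathrm{num}})$ rather than some variant. A secondary point requiring care is genuine $2$-functoriality: one must check that $B$ takes the structural natural transformations witnessing composition and units of spans to the homotopies (and homotopy classes of homotopies-of-homotopies) making the assignment a $2$-functor, which reduces to Theorem~\ref{thm:Morita} together with the compatibility of $B$ with finite limits recorded earlier.
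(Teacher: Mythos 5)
Your proposal is correct and follows essentially the same route as the paper: identify $\St^{pres}(\CGH,\mathcal{O}_{\mathrm{num}})$ as the bicategorical localisation of $\Cat(\CGH)$ at the fully faithful, essentially $\mathcal{O}_{\mathrm{num}}$-surjective functors (via \cite{Roberts_12} and \cite{PronkWarren}), note that Theorem~\ref{thm:Morita} shows $B$ sends these to equivalences in $\CGH_2$, and invoke the universal property of the localisation. The caveats you flag (well-pointedness surviving the span constructions, and the 2-functoriality details) are reasonable points of care, but they are delegated to the cited references rather than constituting a different argument.
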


Here $\Cat(\CGH)$ denotes the (2,2)-category of well-pointed categories internal to $\CGH$, and $\mathcal{O}_{\mathrm{num}}$ is the pretopology on $\CGH$ given by numerable open covers. 
Also, $\CGH_2$ denotes the (2,1)-category of compactly generated Hausdorff spaces, continuous maps, and homotopy classes of homotopies as 2-arrows. 
Equivalences in $\CGH_2$ are precisely homotopy equivalences.

A presentable stack of categories on a given site $(S,J)$ is any stack that is the image of the stackification of a prestack associated to a category internal to $S$, up to equivalence. 
The 2-category of these is denoted $\St^{pres}(S,J)$.
There is an essentially surjective and locally fully faithful 2-functor $\yon\colon\Cat(\CGH) \to \St^{pres}(\CGH,\mathcal{O}_{\mathrm{num}})$ sending a topological category to the stack on $\CGH$ that it presents. 
Presentable stacks of categories in the algebro-geometric/homotopy theoretic setting have been explored in \cite{Rezk_ICM} and \cite{Drinfeld} (see also \cite{Roberts_MO}).

\begin{proof}[Proof (of Corollary \ref{cor:classifying_space_stack_cats})]
The 2-functor $\yon$ is a bicategorical localisation of $\Cat(\CGH)$ at the fully faithful, essentially $\mathcal{O}_{\mathrm{num}}$-surjective functors (this follows by combining \cite{Roberts_12} and \cite{PronkWarren}). 
The 2-functor $B$ sends such functors to homotopy equivalences, by Theorem~\ref{thm:Morita}, hence to equivalences in $\CGH_2$. 
Thus by the definition of bicategorical localisation there is a 2-functor
\[
    \St^{pres}(\CGH,\mathcal{O}_{\mathrm{num}}) \to \CGH_2
\]
whose composite with $\Cat(\CGH) \to \St^{pres}(\CGH,\mathcal{O}_{\mathrm{num}})$ is isomorphic to $B$.
\end{proof}

Thus every presentable stack of categories on $(\CGH,\mathcal{O}_{\mathrm{num}})$ has a well-defined homotopy type. 
This corollary should be compared with results of Ebert \cite{Ebert} and Noohi \cite{Noohi_12}. 
The latter, in particular constructs a (weak) homotopy type for topological stacks of \emph{groupoids} on the full site of topological spaces with \emph{all} open covers (and so neither that result nor Corollary~\ref{cor:classifying_space_stack_cats}  is a proper generalisation of the other).

However, Ebert's classifying space/homotopy type functor is defined for topological stacks presented by topological groupoids $X$ where \emph{every} space $NX_n$, $n\geq 0$, in the nerve $NX$ is paracompact Hausdorff.
As the construction in this paper works for (well-pointed) topological categories, and only requires that the space $X_0=NX_0$ of \emph{objects} is paracompact, we very nearly have a generalisation of Ebert's construction in the well-pointed context---at least up to homotopy.
This is because Ebert uses fat realisation, which under our assumption of well-pointedness agrees with ordinary geometric realisation up to weak homotopy equivalence.

Noohi's homotopy type of a ``hoparacompact'' topological stack \cite[\S8.2]{Noohi_12} is closer to the construction here, as this notion amounts to paracompactness of the object space $X_0$ of \emph{some} presenting groupoid $X$, plus an assumption on the properties of the presentation map $X_0 \to \mathcal{X}$ down to the topological stack presented by $X$.
Note that the topological space used by Noohi to present the homotopy type uses a Milnor-style classifying space construction, rather than the usual geometric realisation (fat or otherwise).

The usefulness of the present construction, in the author's view, lies in the fact that one is checking only very `local' information to know that a topological stack has a well-defined homotopy type: at most knowing properties of the presenting space $X_0$ (which becomes the object space of the internal category) and the unit map $X_0\to X_1$.
This is to be compared to having to know properties of the presentation map $X_0 \to \mathcal{X}$---the map itself, in the case of Noohi's homotopy type, or the simplicial space induced by it, in the case of Ebert's.


\end{document}